\newtheorem{thm}{Theorem}[section]
\newtheorem{cor}[thm]{Corollary}
\newtheorem{prop}[thm]{Proposition}
\newtheorem{lem}[thm]{Lemma}
\theoremstyle{definition}
\newtheorem{defn}[thm]{Definition}
\theoremstyle{remark}
\let\c@equation\c@thm
\numberwithin{equation}{section}
\DeclareMathOperator{\Li}{Li}
\title{Cancellations in power series of sine type}
\author{J. Arias de Reyna}
\address{Facultad de Matem\'aticas \\
Univ.~de Sevilla \\
Apdo.~1160
 \\
41080-Sevilla \\
Spain} 
\thanks{Supported by  MINECO grant MTM2012-30748.}
\email{arias@us.es}
\date{\today}
\begin{document}

\newcommand{\N}{{\mathbb N}}
\newcommand{\R}{{\mathbb R}}
\newcommand{\C}{{\mathbb C}}
\newcommand{\Z}{{\mathbb Z}}
\newcommand{\Q}{{\mathbb Q}}
\newcommand{\arctanh}{\mathop{\rm arctanh }}
\def\Re{\operatorname{Re}}
\def\Im{\operatorname{Im}}
\newfont{\cmbsy}{cmbsy10}
\newcommand{\Orden}{\mathop{\hbox{\cmbsy O}}\nolimits}

\begin{abstract} 
We present a method to study the behavior of a power series of type
\[f(x):=\sum_{n=0}^\infty (-1)^n c_n\frac{x^{2n+1}}{(2n+1)!}\]
when $x\to\infty$. 

We apply our method to study the function
\[f(t):=\int_0^t\frac{dx}{x}\int_0^x\frac{dy}{y}\int_0^y\frac{dz}{z}\bigl\{
\sin x+\sin(x-y)-\sin(x-z)-\sin(x-y+z)\bigr\}.\]
We will derive various different representations of $f(t)$
by means of which it will be shown that $\lim_{t\to+\infty}f(t)=0$, 
disproving  a conjecture by Z. Silagadze, claiming that this limit equals $-\pi^3/12$.

\end{abstract}

\maketitle

\section{Introduction.}
In  Titchmarsh  \cite[Section 14.32]{T} we find two suggestive
equivalents to the Riemann Hypothesis: The RH is equivalent to 
$F(x)=\Orden(x^{\frac12+\varepsilon})$,  or alternatively to $G(x)=\Orden(x^{-\frac14+\varepsilon})$
where
\begin{equation}\label{RieszF}
F(x)=\sum_{n=1}^\infty\frac{(-1)^{n+1}x^n}{(n-1)!\zeta(2n)},\quad
G(x)=\sum_{n=1}^\infty\frac{(-1)^n x^n}{n!\zeta(2n+1)}.
\end{equation}
The first equivalence is due to Riesz \cite{R}, and the second to Hardy and Littlewood \cite{HL}. 

Since $\zeta(n)$ converges to $1$ these series can be considered slight modifications of the 
exponential function. As in the case of the exponential the series are convergent
everywhere but
the small values they get for $x$ large is the result of an amazing cancellation  between large terms of
different signs.  This phenomenon happens also in the case of the sine or cosine 
series. In these simple cases the many algebraic properties of the
corresponding sums yield the proof of 
the cancellations. But how can one treat a case as the series \eqref{RieszF} above?

A  problem in MathOverflow leads us to consider the power series
\begin{equation}\label{power}
f(t)=\sum_{n=1}^\infty(-1)^n
\Bigl(\sum_{k=1}^{2n+1}\frac{H_{k-1}}{k}\Bigr)\frac{t^{2n+1}}{(2n+1)!(2n+1)}
\end{equation}
where the $H_n=\sum_{k=1}^n \frac1k$ are the harmonic numbers. The proposer Z. Silagadze 
asked for a proof that $f(t)\to-\pi^3/12$ (obtained by using some
arguments from physics). 

We  think that our solution presented here is interesting because 
it provides an example of how to treat this type of problems.  It must be said that
the method can be applied to the functions in \eqref{RieszF}, but as we will see 
in Section \ref{S:Riesz}
it only gives another path to the connection between these series and the Riemann Hypothesis.

Silagadze's problem in MathOverflow \cite{S2} was to compute
\begin{equation}\label{E:integral}
\int_0^{\infty}\frac{dx}{x}\int_0^x\frac{dy}{y}\int_0^y\frac{dz}{z}\bigl\{
\sin x+\sin(x-y)-\sin(x-z)-\sin(x-y+z)\bigr\}
\end{equation}
for which he conjectured the value $-\pi^3/12$.

This integral is not absolutely convergent. Being a multiple integral 
it is not clear in what sense he is asking  to compute it. 
The most natural interpretation of \eqref{E:integral} is to define for  $t>0$ 
\begin{equation}\label{def f}
f(t):=\int_0^t\frac{dx}{x}\int_0^x\frac{dy}{y}\int_0^y\frac{dz}{z}\bigl\{
\sin x+\sin(x-y)-\sin(x-z)-\sin(x-y+z)\bigr\}.
\end{equation}
and define the integral \eqref{E:integral} as the 
$\lim_{t\to\infty} f(t)$.
We will prove that this limit $=0$. 

The function $f(t)$ defined by the triple integral in \eqref{def f}
has many interesting properties. 
It extends to  an entire function
whose power series \eqref{power} is a slight modification of the series for $\sin t$. 
We are interested in its behavior for $t\to+\infty$. This is a similar problem
as the one (equivalent to the Riemann Hypothesis) introduced by M. Riesz cited above.  
We will solve the problem in this relatively simple case, essentially, by means of a 
representation of 
$f(t)$ as a Fourier transform \eqref{E:third}. 

The transformations  we apply to the function $f(t)$ conclude with the representation
as a Fourier transform in Proposition \ref{P:Fourier}. But all our representations
are needed to reach this final one which will solve Silagadze's problem.

Our formulas \eqref{E:series} and \eqref{E:third} may also be applied to the computation of the triple integral.
The numerical computation of a triple integral is quite often difficult. By means of the 
power series or the Fourier representation it can be computed easily and above all 
more reliably for relatively small values of $t$.  For large values of $t$ we present explicitly an asymptotic expansion (Section \ref{S:asymp}) that is 
very well suited to compute $f(t)$ with high precision for $t$ large. 
The first term of the asymptotic expansion 
\[f(t)= -\frac{\cos t}{2}\frac{\log^2t}{t}+\Orden\Bigl(\frac{\log t}{t}\Bigr)\]
shows that $f(t)$ has zeros near the zeros of $\cos t$ for $t$ large.

As an application we will also obtain the x-ray of $f(t)$. It shows 
that  the zeros with small absolute value are real.  On average there are two zeros
on each interval of length $2\pi$. The first zeros are  separated by approximately 
4, 2, 4, 2, \dots   The x-ray also shows that the general properties of $f(t)$
are very similar to those of $\sin t$.

To finish the paper we give some details about the Riesz function analogous to $f(t)$, 
also explaining why we may not hope that an analogous analysis will solve the RH. As expected!

\section{The entire function $f(t)$.}

\begin{prop}
The function $f(t)$ defined for $t>0$ by the absolutely convergent integral
\begin{equation}\label{E:fdef}
f(t):=\int_0^t\frac{dx}{x}\int_0^x\frac{dy}{y}\int_0^y\frac{dz}{z}\bigl\{
\sin x+\sin(x-y)-\sin(x-z)-\sin(x-y+z)\bigr\}
\end{equation}
extends to an entire function with power series expansion
\begin{equation}\label{E:series}
f(t)=\sum_{n=1}^\infty(-1)^n\Bigl(
\sum_{k=1}^{2n+1}\frac{H_{k-1}}{k}\Bigr)\frac{t^{2n+1}}{(2n+1)!(2n+1)}
\end{equation}
where $H_n=\sum_{k=1}^n\frac{1}{k}$ is the $n$-th harmonic number.
\end{prop}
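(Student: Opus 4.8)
The plan is to reduce the whole statement to a single termwise integration. Write $g(x,y,z)=\sin x+\sin(x-y)-\sin(x-z)-\sin(x-y+z)$ for the integrand. Applying the product-to-sum identities twice factors it as
\[
g(x,y,z)=-4\,\sin\tfrac{z}{2}\,\sin\tfrac{y-z}{2}\,\sin\!\Bigl(x-\tfrac{y}{2}\Bigr),
\]
and hence
\[
\frac{g(x,y,z)}{xyz}=-4\,\frac{\sin(z/2)}{z}\;\frac{\sin\bigl((y-z)/2\bigr)}{y}\;\frac{\sin\bigl(x-y/2\bigr)}{x}.
\]
On the simplex $0\le z\le y\le x\le t$ each of the three factors is bounded (using $|\sin u|\le|u|$ together with $0\le\frac{y-z}{2}\le\frac{y}{2}$ and $\frac{x}{2}\le x-\frac{y}{2}\le x$), so $g/(xyz)$ extends to a bounded continuous function on the compact simplex and the triple integral converges absolutely.

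First I would pass to the power series. The factor $\sin\frac z2$ shows that every monomial $x^ay^bz^c$ in the Taylor series $g=\sum_{a,b,c}g_{abc}\,x^ay^bz^c$ has $c\ge1$, which is exactly the condition making the nested integrals convergent; indeed a direct computation gives
\[
\int_0^t\frac{dx}{x}\int_0^x\frac{dy}{y}\int_0^y\frac{dz}{z}\,x^ay^bz^c=\frac{t^{a+b+c}}{c\,(b+c)\,(a+b+c)}\qquad(c\ge1).
\]
Since the Taylor series of $g$ converges uniformly on the compact simplex, I may integrate term by term and obtain
\[
f(t)=\sum_{a,b,c}g_{abc}\,\frac{t^{a+b+c}}{c\,(b+c)\,(a+b+c)}.
\]
Writing $\sin\!\bigl(x-\frac y2\bigr)=\sin x\cos\frac y2-\cos x\sin\frac y2$ exhibits $g=P(y,z)\sin x+Q(y,z)\cos x$, where $P$ has even and $Q$ odd total $(y,z)$-degree; since $\sin x$ carries odd and $\cos x$ even powers of $x$, every surviving monomial has odd total degree $a+b+c=2n+1$, so $f(t)=\sum_{n\ge1}c_n\,t^{2n+1}$.

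It remains to identify $c_n$, and this is the step I expect to be the main obstacle. I would organize the sum by carrying out the three integrations one variable at a time at the level of power series. Let $\mathcal L$ be the operator $(\mathcal Lh)(x)=\int_0^x h(y)\,y^{-1}\,dy$, which simply divides the degree-$k$ coefficient of $h$ by $k$. The innermost integration produces $A=\mathcal L[1-\cos]$ and $B=\mathcal L[\sin]$; the middle one produces $\tilde U=\mathcal L\bigl[A(1+\cos)-B\sin\bigr]$ and $\tilde V=\mathcal L\bigl[-A\sin+B(1-\cos)\bigr]$; and the outer one assembles
\[
f=\mathcal L\bigl[\sin\cdot\tilde U+\cos\cdot\tilde V\bigr].
\]
Extracting the coefficient of $x^{2n+1}$ turns the nested denominators $c(b+c)(a+b+c)$ into iterated harmonic sums, and the genuine work is the algebraic identity collapsing them to $\sum_{k=1}^{2n+1}\frac{H_{k-1}}{k}$; I would establish it either by a generating-function computation with the Cauchy products above or by induction on $n$, taking the value $-\frac1{18}$ of the $t^3$-coefficient as the base case.

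Finally, entireness is immediate from the closed form: since $H_{k-1}\le1+\log k$ gives $\sum_{k=1}^{2n+1}\frac{H_{k-1}}{k}=\Orden\bigl((\log n)^2\bigr)$, division by $(2n+1)!\,(2n+1)$ leaves coefficients that decay faster than any geometric rate. Hence the series in \eqref{E:series} has infinite radius of convergence, and the entire function it defines agrees with the absolutely convergent integral for $t>0$, giving the asserted extension.
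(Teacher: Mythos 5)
Your setup is sound and in places cleaner than the paper's: the factorization $g=-4\sin\frac z2\,\sin\frac{y-z}2\,\sin(x-\frac y2)$ is correct, it gives a neat proof of absolute convergence (the paper instead uses the Mean Value Theorem to bound the integrand by $2/(xy)$), and the elementary formula
\[
\int_0^t\frac{dx}{x}\int_0^x\frac{dy}{y}\int_0^y\frac{dz}{z}\,x^ay^bz^c=\frac{t^{a+b+c}}{c\,(b+c)\,(a+b+c)}\qquad(c\ge1)
\]
together with your parity argument correctly reduces the proposition to a coefficient identity. (One small repair: uniform convergence of the Taylor series of $g$ on the compact simplex does not by itself justify integrating $g/(xyz)$ term by term near the faces where $xyz=0$; you should instead observe that $\sum_{a,b,c}|g_{abc}|\,t^{a+b+c}/(c(b+c)(a+b+c))\le\sum_{a,b,c}|g_{abc}|\,t^{a+b+c}<\infty$ and invoke Tonelli. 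The paper avoids this by first rescaling $x=tu$, $y=tv$, $z=tw$, which turns $f(t)$ into an integral over a fixed domain of a function entire in $t$.)

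The genuine gap is exactly where you say you expect it: the identity
\[
\sum_{a+b+c=2n+1}\frac{g_{abc}}{c\,(b+c)\,(a+b+c)}=\frac{(-1)^n}{(2n+1)!\,(2n+1)}\sum_{k=1}^{2n+1}\frac{H_{k-1}}{k}
\]
is the entire content of the proposition, and you do not prove it. ``Induction on $n$ with base case $-1/18$'' is not a workable strategy as stated, because no recursion relating the coefficient of $t^{2n+3}$ to that of $t^{2n+1}$ is exhibited; and ``a generating-function computation with the Cauchy products'' is precisely the nontrivial work being deferred. For comparison, the paper splits the integrand into the two pairs $\{x^{2n+1}-(x-z)^{2n+1}\}$ and $\{(x-y)^{2n+1}-(x-y+z)^{2n+1}\}$, integrates each explicitly (a binomial expansion giving $-\frac{1}{2n+1}\sum_{k}(-1)^k\binom{2n+1}{k}k^{-2}$ in one case, a Beta integral giving $-\frac{1}{2n+1}\sum_k k^{-2}$ in the other), and then needs the separate combinatorial lemma $\sum_{k=1}^{m}(-1)^{k}\binom mk k^{-2}=-\sum_{k=1}^{m}H_k/k$, itself proved by induction resting on the classical $\sum_{k=1}^{m}(-1)^{k+1}\binom mk k^{-1}=H_m$. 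A lemma of this weight is unavoidable; your operator formalism (with $\mathcal L$ dividing the $k$-th coefficient by $k$) is a reasonable bookkeeping device for producing the nested harmonic sums, but until the coefficient extraction is actually carried out and collapsed to $\sum_{k=1}^{2n+1}H_{k-1}/k$, the proposition is not proved. Relatedly, your closing entireness argument should not lean on the closed form you have not yet established; it follows already from $\sum_{a,b,c}|g_{abc}|R^{a+b+c}<\infty$ for every $R$.
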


\begin{proof}
Applying the Mean Value Theorem
we get the following bound of the absolute value of the integrand in the definition of $f(t)$
\[\frac{1}{x}\cdot\frac{1}{y}\Bigl|\frac{\sin x-\sin(x-z)}{z}-\frac{\sin(x-y+z)-\sin(x-y)}
{z}\Bigr|\le \frac{2}{xy}.\]
When integrating over $z\in(0,y)$ we get something bounded by $2/x$, and integrating this 
over $y\in(0,x)$ we get something bounded by $2$. The integral of
this over $x\in(0,t)$ yields something bounded by $2t$. This shows that
the integral in \eqref{E:fdef} is absolutely convergent.

Changing variables  $x=tu$, $y=tv$, $z=tw$ yields
\[f(t)=\int_0^1\frac{du}{u}\int_0^u\frac{dv}{v}\int_0^v\frac{dw}{w}\bigl\{
\sin (tu)+\sin(tu-tv)-\sin(tu-tw)-\sin(tu-tv+tw)\bigr\}\]
and renaming the  variables
\[f(t)=\int_0^1\frac{dx}{x}\int_0^x\frac{dy}{y}\int_0^y\frac{dz}{z}\bigl\{
\sin (tx)+\sin(tx-ty)-\sin(tx-tz)-\sin(tx-ty+tz)\bigr\}.\]
This representation shows that $f(t)$ extends to an entire function with power series 
expansion
\[f(t)=\sum_{n=0}^\infty(-1)^n
A_n\frac{t^{2n+1}}{(2n+1)!}\]
where
\[A_n=\int_0^1\frac{dx}{x}\int_0^x\frac{dy}{y}\int_0^y\frac{dz}{z}\bigl\{x^{2n+1}+(x-y)^{2n+1}-(x-z)^{2n+1}-(x-y+z)^{2n+1}\}.\]
Notice that $A_0=0$. 
To compute the other $A_n$ we get successively
\begin{multline*}
\int_0^y\bigl\{x^{2n+1}-(x-z)^{2n+1}\}\frac{dz}{z}=
-\sum_{k=1}^{2n+1}(-1)^{k}\binom{2n+1}{k}\frac{y^k}{k}x^{2n+1-k},\\
\int_0^x\frac{dy}{y}\int_0^y\bigl\{x^{2n+1}-(x-z)^{2n+1}\}\frac{dz}{z}=
-\sum_{k=1}^{2n+1}(-1)^{k}\binom{2n+1}{k}\frac{x^{2n+1}}{k^2},\\
\int_0^1\frac{dx}{x}\int_0^x\frac{dy}{y}\int_0^y\bigl\{x^{2n+1}-(x-z)^{2n+1}\}\frac{dz}{z}=-\frac{1}{2n+1}\sum_{k=1}^{2n+1}(-1)^{k}\binom{2n+1}{k}\frac{1}{k^2}\\
=\frac{1}{2n+1}
\sum_{k=1}^{2n+1}\frac{H_k}{k}
\end{multline*}
where the last step is proved in Lemma \ref{L:}.

For the other part of $A_n$ we compute
\begin{multline*}
\int_0^y\bigl\{(x-y)^{2n+1}-(x-y+z)^{2n+1}\}\frac{dz}{z}=
-\sum_{k=1}^{2n+1}\binom{2n+1}{k}\frac{y^k}{k}(x-y)^{2n+1-k},\\
\int_0^x\frac{dy}{y}\int_0^y\bigl\{(x-y)^{2n+1}-(x-y+z)^{2n+1}\}\frac{dz}{z}\\
=-\sum_{k=1}^{2n+1}\frac{1}{k}\binom{2n+1}{k}\int_0^xy^k(x-y)^{2n+1-k}\frac{dy}{y}.
\end{multline*}
In the last integral we change variables $y=xu$ yielding
\begin{multline*}
=-\sum_{k=1}^{2n+1}\frac{x^{2n+1}}{k}\binom{2n+1}{k}\int_0^1 u^k(1-u)^{2n+1-k}
\frac{du}{u}=\\
=-\sum_{k=1}^{2n+1}\frac{x^{2n+1}}{k}\frac{(2n+1)!}{k!(2n+1-k)!}\frac{(k-1)!(2n+1-k)!}
{(2n+1)!}=-\sum_{k=1}^{2n+1}\frac{x^{2n+1}}{k^2}
\end{multline*}
so that 
\[\int_0^1\frac{dx}{x}\int_0^x\frac{dy}{y}\int_0^y\bigl\{(x-y)^{2n+1}-(x-y+z)^{2n+1}\}\frac{dz}{z}=-\frac{1}{2n+1}\sum_{k=1}^{2n+1}\frac{1}{k^2}.\]
Collecting all our partial results we obtain
\[A_n=\frac{1}{2n+1}
\sum_{k=1}^{2n+1}\frac{H_k}{k}-\frac{1}{2n+1}\sum_{k=1}^{2n+1}\frac{1}{k^2}=
\frac{1}{2n+1}
\sum_{k=1}^{2n+1}\frac{H_{k-1}}{k}.\]
\end{proof}

\begin{lem}\label{L:}
For any natural number $n$  we have
\begin{equation}
\sum_{k=1}^{n}(-1)^{k}\binom{n}{k}\frac{1}{k^2}=-
\sum_{k=1}^{n}\frac{H_k}{k}.
\end{equation}
\end{lem}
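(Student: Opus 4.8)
The plan is to prove the identity
\[
\sum_{k=1}^{n}(-1)^{k}\binom{n}{k}\frac{1}{k^2}=-\sum_{k=1}^{n}\frac{H_k}{k}
\]
by an induction on $n$, using the standard absorption recurrence for the alternating binomial sum on the left. Let me sketch both the induction and an alternative integral route.

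Let $S_n$ denote the left-hand side and $T_n$ the right-hand side. For the inductive step I would study the difference $S_{n}-S_{n-1}$. Using $\binom{n}{k}=\binom{n-1}{k}+\binom{n-1}{k-1}$, one gets
\[
S_n-S_{n-1}=\sum_{k=1}^{n}(-1)^k\binom{n-1}{k-1}\frac{1}{k^2},
\]
and the identity $\binom{n-1}{k-1}=\frac{k}{n}\binom{n}{k}$ turns this into $\frac{1}{n}\sum_{k=1}^{n}(-1)^k\binom{n}{k}\frac1k$. Now the inner sum is a classical one, $\sum_{k=1}^{n}(-1)^k\binom{n}{k}\frac1k=-H_n$, so $S_n-S_{n-1}=-\frac{H_n}{n}$. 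Since $T_n-T_{n-1}=-\frac{H_n}{n}$ as well, and $S_1=T_1=-1$, the two sides satisfy the same first-order recurrence with the same initial value, proving $S_n=T_n$ for all $n$.

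The main obstacle reduces to the lemma-within-a-lemma, namely $\sum_{k=1}^{n}(-1)^k\binom{n}{k}\frac1k=-H_n$. The cleanest way to establish this is to write $\frac1k=\int_0^1 u^{k-1}\,du$, interchange sum and integral, and use the binomial theorem to get $\sum_{k=1}^n(-1)^k\binom{n}{k}u^{k-1}=\frac{(1-u)^n-1}{u}$; then $\int_0^1\frac{(1-u)^n-1}{u}\,du=-H_n$ after the substitution $v=1-u$ and the telescoping expansion $\frac{1-v^n}{1-v}=\sum_{j=0}^{n-1}v^j$. This auxiliary identity is itself provable by the same absorption induction, so the whole argument can be made self-contained without appealing to any integral representation if preferred.

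As an alternative I would record a purely integral proof of the main identity that mirrors the computation already carried out in the Proposition above. Writing $\frac{1}{k^2}=\int_0^1\frac{dx}{x}\int_0^x y^{k-1}\,dy$ and summing against $(-1)^k\binom{n}{k}$ gives
\[
S_n=\int_0^1\frac{dx}{x}\int_0^x\frac{(1-y)^n-1}{y}\,dy,
\]
whose iterated evaluation produces exactly $-\sum_{k=1}^n\frac{1}{k}\sum_{j=1}^k\frac{1}{j}$-type double sums that reorganize into $-\sum_{k=1}^n\frac{H_k}{k}$. I expect the induction to be shorter and less error-prone, so I would present that as the primary proof and mention the integral route only as a remark, since it is the version that directly connects to the manipulations used for the $A_n$ above.
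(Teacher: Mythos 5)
Your proof is correct and follows essentially the same route as the paper: both arguments induct on $n$, use Pascal's rule to telescope the difference $S_n-S_{n-1}$, and reduce everything to the classical identity $\sum_{k=1}^{n}(-1)^{k+1}\binom{n}{k}\frac{1}{k}=H_n$. The only difference is that the paper cites Gradshteyn--Ryzhik 0.155.4 for that identity, whereas you supply a short integral proof of it, which makes your version self-contained.
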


\begin{proof}
To prove $a_n=b_n$ it is sufficient to prove $a_1=b_1$ and $a_n-a_{n-1}=b_n-b_{n-1}$.
In our case the equality for $n=1$ is checked easily. 
So, we have to show
\[\sum_{k=1}^n(-1)^{k+1}\binom{n}{k}\frac{1}{k^2}-
\sum_{k=1}^{n-1}(-1)^{k+1}\binom{n-1}{k}\frac{1}{k^2}=\frac{H_n}{n}.\]
Since $\binom{n}{k}=\binom{n-1}{k-1}+\binom{n-1}{k}$, 
the equality is equivalent to 
\[\frac{(-1)^{n+1}}{n^2}+\sum_{k=1}^{n-1}(-1)^{k+1}\binom{n-1}{k-1}\frac{1}{k^2}=\frac{H_n}{n}\]
or, multiplying by $n$
\begin{equation}\label{E:GR}
\sum_{k=1}^{n}(-1)^{k+1}\binom{n}{k}\frac{1}{k}=H_n.
\end{equation}
This is formula 0.155.4 in Gradshteyn and Ryzhik \cite[p.~4]{GR}.
\end{proof}

\section{Plot of the function $f(t)$.}
The power series expansion of $f(t)$ is analogous to the one for  $\sin t$. 
These power series are not well suited for computation, 
because of the violent  cancellation
between large terms.  Nevertheless, one may use it using high
precision in the computation of the terms to get approximate values. 
Assuming $t>0$, each term of the power series \eqref{E:series} is in absolute value
less than $t^{2n+1}/(2n+1)!$. Therefore the error committed by summing only the 
first $N>3t$ terms of the series \eqref{E:series} is less than $2^{-2N}$. 
It follows that to compute $f(t)$ with error less than $2\varepsilon$ we need
only compute the sum of the first $N$ terms of the series with error less 
than $\varepsilon$,  taking $N$ large enough  so that 
\[N\ge \frac{\log(1/\varepsilon)}{2\log2},\quad N>3t.\]
All the terms of the series are less than $t^{2n+1}/(2n+1)! <(\frac{et}{2n+1})^{2n+1}\le e^t$. So, we must compute each term with an error less than $\varepsilon/N$, for which
it will suffice  to compute each term working with a precision
\[P:=\frac{t+\log(N/e)}{\log 10} \quad \text{decimal digits}.\]
Of course this will be difficult for $t$ very large, but today we may easily compute 
with thousands of digits of precision.  

\bigskip

\includegraphics[width=0.95\linewidth]{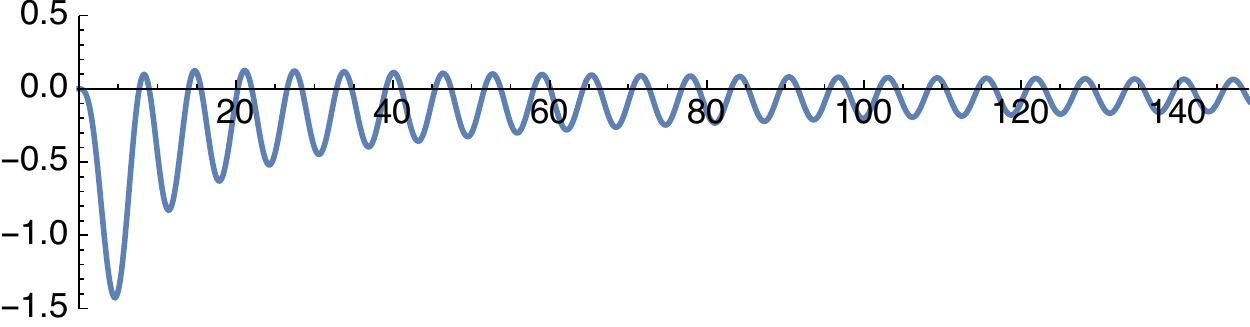}

We may say that the computation of the power series is much easier than the 
computation of the multiple integral \eqref{E:fdef} or any other multiple integral 
giving $f(t)$ considered in this paper. This is true even when $t$ is small.

\section{Computation of the limit of $f(t)$.}

First we prove the following integral representation of $f(t)$:
\begin{prop}
For $t>0$ we have
\begin{multline}\label{E:second}
f(t)=\\\int_0^tdu\int_0^1\int_0^1\Bigl(\frac{x\sin u}{u(1-x)(1-x y)}-\frac{\sin(ux)}{u(1-x)(1-y)}+\frac{\sin(uxy)}{u(1-y)(1-xy)}\Bigr)\,dx\,dy.
\end{multline}
\end{prop}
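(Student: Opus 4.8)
The plan is to show that the right-hand side of \eqref{E:second}, call it $R(t)$, has the Taylor expansion \eqref{E:series}, whence $R=f$. Write $I(u)$ for the inner double integral, so $R(t)=\int_0^t I(u)\,du$. The three summands of the integrand are each singular on the boundary of the unit square (along $x=1$ for the first, along $y=1$ for the second and third, and at the corner), so no term is individually integrable; the whole point is that these singularities cancel, and every interchange of summation with integration below must respect that cancellation. This means I must never split the integrand into its three divergent pieces, but always integrate a suitably grouped combination.

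First I would expand the three sines. Since $\frac{\sin u}{u}$, $\frac{\sin(ux)}{u}$ and $\frac{\sin(uxy)}{u}$ are power series in $u^2$ with the common coefficients $\frac{(-1)^m}{(2m+1)!}$, the integrand equals $\sum_{m\ge0}\frac{(-1)^m u^{2m}}{(2m+1)!}\,g_m(x,y)$, where
\[g_m(x,y)=\frac{x}{(1-x)(1-xy)}-\frac{x^{2m+1}}{(1-x)(1-y)}+\frac{(xy)^{2m+1}}{(1-y)(1-xy)}.\]
The heart of the argument is the integral of $g_m$ over $y\in(0,1)$, computed by keeping the first term alone and \emph{grouping} the second and third (whose poles at $y=1$ cancel). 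For the first term, $\int_0^1\frac{x\,dy}{(1-x)(1-xy)}=\frac{-\log(1-x)}{1-x}=\sum_{l\ge1}H_l x^l$. For the grouped pair, write $\frac{x^{2m+1}}{1-y}\bigl(\frac{y^{2m+1}}{1-xy}-\frac{1}{1-x}\bigr)$, expand $\frac{y^{2m+1}}{1-xy}-\frac{1}{1-x}=\sum_{j\ge0}x^j(y^{2m+1+j}-1)$, and use $\int_0^1\frac{1-y^{M}}{1-y}\,dy=H_{M}$ to obtain $-\sum_{l\ge2m+1}H_l x^l$. The divergent tails cancel and
\[\int_0^1 g_m(x,y)\,dy=\sum_{l=1}^{2m}H_l\,x^l,\]
a polynomial in $x$.

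Next I would integrate over $x\in(0,1)$, which is now immediate: $\int_0^1\sum_{l=1}^{2m}H_l x^l\,dx=\sum_{l=1}^{2m}\frac{H_l}{l+1}=\sum_{k=1}^{2m+1}\frac{H_{k-1}}{k}$ after the substitution $k=l+1$, the term $k=1$ contributing $H_0=0$. Calling this common value $B_m$, we get $I(u)=\sum_{m\ge0}\frac{(-1)^m u^{2m}}{(2m+1)!}B_m$, and integrating over $u\in(0,t)$ yields $R(t)=\sum_{m\ge0}\frac{(-1)^m B_m}{(2m+1)!(2m+1)}t^{2m+1}$, which is exactly \eqref{E:series}.

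The routine-but-essential remaining part is justifying the three interchanges of $\sum_m$ with the $y$-, $x$- and $u$-integrations. Each is legitimate precisely because of the grouping: for fixed $x\in(0,1)$ the function $g_m(x,\cdot)$ is bounded on $(0,1)$ and $\sum_m\frac{(-1)^m u^{2m}}{(2m+1)!}g_m$ converges uniformly in $y$, while after the $y$-integration the coefficients $\sum_{l=1}^{2m}H_l x^l$ are bounded by $2m\,H_{2m}$ uniformly in $x\in[0,1]$, so that division by $(2m+1)!$ gives a uniformly convergent series integrable term by term in $x$ and in $u$. I expect the main obstacle to be exactly this bookkeeping — carrying out every step on the grouped combination so the cancellation stays explicit — rather than any single computation; note also that this same order of integration (first $y$, producing a polynomial) is what shows $I(u)$, and hence $R(t)$, to be well-defined in the first place.
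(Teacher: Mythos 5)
Your proof is correct, but it runs in the opposite direction from the paper's and by different means. The paper starts from the already-established power series \eqref{E:series}, writes $\sum_{k\le 2n+1}H_{k-1}/k$ as a double integral of $\sum_k x^{k-1}\sum_{j<k}y^{j-1}$ and the factor $1/(2n+1)$ as $\int_0^t u^{2n}\,du/t^{2n+1}$, then encodes $1/(2n+1)!$ by a contour integral $\frac{1}{2\pi i}\int_C e^z z^{-2n-2}\,dz$ over a circle of radius $r>t$; since $|u/z|<1$ all the resulting geometric-type series converge absolutely, can be summed in closed form to a rational function of $(u,x,y,z)$, and the Residue Theorem then produces the three-sine integrand in one stroke. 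You instead take the right-hand side as given, expand the sines, and evaluate $\int_0^1 g_m(x,y)\,dy$ by hand, grouping the second and third terms so that the $1/(1-y)$ singularities cancel and the result collapses to the polynomial $\sum_{l=1}^{2m}H_l x^l$ via $\int_0^1\frac{1-y^M}{1-y}\,dy=H_M$; your computations check out (including the reindexing $k=l+1$ with $H_0=0$). What the paper's contour device buys is that every interchange happens inside a manifestly absolutely convergent expression, and the absolute convergence of \eqref{E:second} falls out as a byproduct; what your route buys is elementariness (no residues) and an explicit view of where the cancellation lives, at the price of the convergence bookkeeping you acknowledge --- to make the $y$-interchange airtight you should record the quantitative bound $\sup_{0<y<1}|g_m(x,y)|=\Orden\bigl(m(1-x)^{-2}\bigr)$ for fixed $x<1$ so that the Weierstrass test applies, and note that your argument establishes the iterated integral in the order $dy\,dx$ rather than absolute convergence over the square. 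Neither point is a gap in substance.
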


\begin{proof}
Starting from the power series expansion it is easy to get
\begin{multline*}
f(t)=\sum_{n=1}^\infty(-1)^n
\sum_{k=1}^{2n+1}\frac{1}{k}\sum_{j=1}^{k-1}\frac{1}{j}\frac{t^{2n+1}}{(2n+1)!(2n+1)}\\
=\int_0^tdu\int_0^1dx\int_0^1dy\sum_{n=1}^\infty(-1)^n
\sum_{k=1}^{2n+1}x^{k-1}\sum_{j=1}^{k-1}y^{j-1}\frac{u^{2n}}{(2n+1)!}.
\end{multline*}
If  $C$ is a circle with radius $r>t$, we may express the factorial by means of 
the Residues Theorem
\begin{equation}\label{E:absol}
f(t)=
\int_0^tdu\int_0^1dx\int_0^1dy \frac{1}{2\pi i}\int_C\frac{e^z}{z^2}\sum_{n=1}^\infty(-1)^n\sum_{k=1}^{2n+1}x^{k-1}\sum_{j=1}^{k-1}y^{j-1}(u/z)^{2n}\,dz.
\end{equation}
(Since $|u/z|<1$ all these series converge absolutely and the order of the integrals and
summations may be interchanged). 
This can be written as
\[f(t)=\
\int_0^tdu\int_0^1dx\int_0^1dy \frac{1}{2\pi i}\int_C\frac{e^z}{z^2}\frac{u^4x^3yz^2-u^2xz^4-u^2x^2z^4-u^2x^2yz^4}{(u^2+z^2)(u^2x^2+z^2)(u^2x^2y^2+z^2)}\,dz.\]
By the Residue Theorem  we obtain 
\begin{multline*}
\frac{1}{2\pi i}\int_C\frac{e^z}{z^2}\frac{u^4x^3yz^2-u^2xz^4-u^2x^2z^4-u^2x^2yz^4}{(u^2+z^2)(u^2x^2+z^2)(u^2x^2y^2+z^2)}\,dz\\
=\frac{x\sin u}{u(1-x)(1-x y)}-\frac{\sin(ux)}{u(1-x)(1-y)}+\frac{\sin(uxy)}{u(1-y)(1-xy)}
\end{multline*}
establishing our claim \eqref{E:second}.  Observe that since our integral representation 
\eqref{E:second} appears
after computing one of the integrals in  the absolutely 
convergent integral \eqref{E:absol},
the integrals in our new representation are also absolutely convergent. 
\end{proof}

Notice that in the representation \eqref{E:second} we may interchange the integral 
in $u$ with the integral in $(x,y)$, but then  it is not easy  to justify the interchange 
of the limit in $t$ with the integrals,
because the integrand is not dominated by an integrable function. If we would 
proceed formally
in this way we would easily get that the limit is $0$, but, as  said before, this is not allowed. 
Therefore we follow another path.

\begin{prop}\label{P:Fourier}
For any complex $t$ we have
\begin{equation}\label{E:third}
f(t)=\int_0^1\Bigl(\frac12\log^2(1-x)+\sum_{n=1}^\infty\frac{(1-x)^n-1}{n^2}\Bigr)
\frac{\sin(tx)}{x}\,dx.
\end{equation}
\end{prop}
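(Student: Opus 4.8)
The plan is to prove the identity at the level of Taylor coefficients in $t$, starting from the power series \eqref{E:series} rather than from \eqref{E:second}; this is the natural route since \eqref{E:third} is asserted for all complex $t$ and both sides are entire. Write $g(x)=\frac12\log^2(1-x)+\sum_{n=1}^\infty\frac{(1-x)^n-1}{n^2}=\frac12\log^2(1-x)+\Li_2(1-x)-\zeta(2)$ for the bracketed factor. Since $|(1-x)^n-1|\le 2$ the series converges uniformly, so $g$ is continuous on $[0,1)$ with only an integrable $\log^2(1-x)$ singularity at $x=1$; hence $g\in L^1(0,1)$ and, because $\sin(tx)/x$ is bounded on $[0,1]$ for each fixed $t$, the right-hand side of \eqref{E:third} is a well-defined entire function of $t$. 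Expanding $\frac{\sin(tx)}{x}=\sum_{n=0}^\infty(-1)^n\frac{t^{2n+1}x^{2n}}{(2n+1)!}$ and using $\int_0^1|g(x)|\frac{\sinh(|t|x)}{x}\,dx\le\|g\|_1\sinh|t|<\infty$ to justify the interchange by Fubini, the claim reduces to showing that the even moments satisfy $\int_0^1 g(x)x^{2n}\,dx=\frac{1}{2n+1}\sum_{k=1}^{2n+1}\frac{H_{k-1}}{k}$, i.e. that they reproduce the coefficients in \eqref{E:series}.

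I would therefore compute the full moment $I_m:=\int_0^1 g(x)x^m\,dx$ for every integer $m\ge0$ and show $I_m=\frac{1}{m+1}\sum_{k=1}^{m+1}\frac{H_{k-1}}{k}$; specializing $m=2n$ then finishes the proof (the case $m=0$ gives $I_0=H_0=0$, consistent with the series starting at $n=1$). Split $I_m$ according to the two pieces of $g$. For the logarithmic piece, substitute $s=1-x$ and expand $(1-s)^m$ by the binomial theorem, so that $\frac12\int_0^1\log^2(1-x)x^m\,dx=\frac12\sum_{j=0}^m\binom{m}{j}(-1)^j\int_0^1 s^j\log^2 s\,ds$; using $\int_0^1 s^j\log^2 s\,ds=\frac{2}{(j+1)^3}$ and the identity $\binom{m}{j}=\frac{j+1}{m+1}\binom{m+1}{j+1}$ this collapses, after reindexing $k=j+1$, to $-\frac{1}{m+1}\sum_{k=1}^{m+1}(-1)^k\binom{m+1}{k}\frac1{k^2}$, which by Lemma \ref{L:} equals $\frac{1}{m+1}\sum_{k=1}^{m+1}\frac{H_k}{k}$.

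For the remaining piece I would handle $S(m):=\int_0^1 x^m\Li_2(1-x)\,dx$ by integration by parts: since $\frac{d}{dx}\Li_2(1-x)=\frac{\log x}{1-x}$ and the boundary terms vanish, $S(m)=-\frac{1}{m+1}\int_0^1\frac{x^{m+1}\log x}{1-x}\,dx$; expanding $\frac{x^{m+1}}{1-x}=\sum_{l\ge m+1}x^l$ and using $\int_0^1 x^l\log x\,dx=-\frac{1}{(l+1)^2}$ gives $S(m)=\frac{1}{m+1}\sum_{n\ge m+2}\frac1{n^2}$. Combining with the $-\zeta(2)/(m+1)$ coming from the constant term of $g$, the dilogarithmic piece contributes $-\frac{1}{m+1}\sum_{k=1}^{m+1}\frac1{k^2}$, and adding the logarithmic piece yields $I_m=\frac{1}{m+1}\sum_{k=1}^{m+1}\bigl(\frac{H_k}{k}-\frac1{k^2}\bigr)=\frac{1}{m+1}\sum_{k=1}^{m+1}\frac{H_{k-1}}{k}$ via $H_k=H_{k-1}+\frac1k$, as required.

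The genuinely delicate point is not any single integral but making the two moment computations land on exactly complementary sums: the $\log^2$ integral produces $\sum H_k/k$ (this is where Lemma \ref{L:} is essential), while the dilogarithm plus constant produce $-\sum 1/k^2$, and only the elementary relation $H_k=H_{k-1}+1/k$ merges them into the coefficient appearing in \eqref{E:series}. The analytic bookkeeping—uniform or dominated convergence for every term-by-term interchange, and the vanishing of the boundary terms in the integration by parts despite the $\log^2(1-x)$ blow-up of $g$ at $x=1$—is routine but must be checked; once the even moments are matched, equality of the two entire functions for all complex $t$ is immediate.
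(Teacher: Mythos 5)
Your proof is correct, but it takes a genuinely different route from the paper. The paper derives \eqref{E:third} from the intermediate double-integral representation \eqref{E:second}: it interchanges the $u$- and $(x,y)$-integrations, rewrites the truncated sine integrals with Iverson brackets, and evaluates the resulting inner integral $J(a)$ by partitioning the unit square into three regions on which the integrand telescopes to $\frac{x}{(1-x)(1-xy)}$, $-\frac{1}{(1-y)(1-xy)}$, and $0$ respectively; the identity is first obtained for $t>0$ and then extended by analyticity. You instead bypass \eqref{E:second} entirely and match Taylor coefficients in $t$, reducing the claim to the moment identity $\int_0^1 g(x)x^{m}\,dx=\frac{1}{m+1}\sum_{k=1}^{m+1}\frac{H_{k-1}}{k}$, which you verify by splitting $g$ into its $\log^2$ and dilogarithm pieces; your computations (the value $\int_0^1 s^j\log^2 s\,ds=2/(j+1)^3$, the absorption identity $\binom{m}{j}=\frac{j+1}{m+1}\binom{m+1}{j+1}$, the integration by parts using $\frac{d}{dx}\Li_2(1-x)=\frac{\log x}{1-x}$, and the final merge via $H_k=H_{k-1}+1/k$) all check out, and the Fubini/monotone-convergence justifications you invoke are adequate. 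Your approach is more self-contained — it needs only \eqref{E:series} and Lemma \ref{L:} (which the paper has already proved and which you reuse in a pleasing second role), and it delivers the identity for all complex $t$ at once rather than via continuation from $t>0$. What it gives up is the representation \eqref{E:second} itself, which the paper treats as an object of independent interest, and the geometric explanation, visible in the three-region computation, of why the kernel is exactly $\frac12\log^2(1-x)+\Li_2(1-x)-\pi^2/6$. One cosmetic caution: the paper later reserves the letter $g$ for the kernel \emph{divided by} $x$ (Definition in Section 5), so if your argument were spliced in you would want to rename your $g$.
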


\begin{proof}
We take $t>0$ in the representation \eqref{E:second}, and change the order of integration
\begin{multline*}
f(t)=\\\int_0^1\int_0^1\,dx\,dy\int_0^t\Bigl(\frac{x\sin u}{u(1-x)(1-x y)}-\frac{\sin(ux)}{u(1-x)(1-y)}+\frac{\sin(uxy)}{u(1-y)(1-xy)}\Bigr)\,du.
\end{multline*}
Now subdivide the inner integral in three and  change variables appropriately 
\begin{multline*}
f(t)=\int_0^1\int_0^1\,dx\,dy\Bigl(\int_0^t\frac{x\sin u}{u(1-x)(1-x y)}\,du\\
-
\int_0^{tx}\frac{\sin u}{u(1-x)(1-y)}\,du
+\int_0^{txy}\frac{\sin u}{u(1-y)(1-xy)}\,du\Bigr).
\end{multline*}
We use Iverson's notation \cite{K}, so that, for any proposition $P$, the symbol
$[P]$ is $1$ if $P$ is true and $0$ if
it is false. In this way we may write
\begin{multline*}
f(t)=\\\int_0^1\int_0^1\,dx\,dy\int_0^t\Bigl(\frac{x[u<t]\sin u}{u(1-x)(1-x y)}\,du-
\frac{[u<tx]\sin u}{u(1-x)(1-y)}+\frac{[u<txy]\sin u}{u(1-y)(1-xy)}\Bigr)\,du.
\end{multline*}
To simplify the notation we put $a:=u/t$,  so that  always $0<a<1$. Interchanging the integrals we obtain
\begin{multline*}
f(t)=\\ \int_0^t\frac{\sin u}{u}\,du\int_0^1\int_0^1\Bigl(\frac{x}{(1-x)(1-x y)}\,du-
\frac{[a<x]}{(1-x)(1-y)}+\frac{[a<xy]}{(1-y)(1-xy)}\Bigr)\,dx\,dy
\end{multline*}
and will compute the inner double integral in $x$ and $y$. This is 
\[J(a):=\int_0^1\int_0^1\Bigl(\frac{x}{(1-x)(1-x y)}\,du-
\frac{[a<x]}{(1-x)(1-y)}+\frac{[a<xy]}{(1-y)(1-xy)}\Bigr)\,dx\,dy.\]
We subdivide the square $(0,1)^2$ into three disjoint sets
\begin{multline*}
S_1:=\{(x,y): 0<x\le a, 0<y<1\},\quad
S_2:=\{(x,y): a<x<1, 0<xy\le a\},\quad \\
S_3:=\{(x,y): a<xy\}.
\end{multline*}
In $S_1$ the integrand is $=\frac{x}{(1-x)(1-x y)}$,
and in $S_2$ it is 
\[=\frac{x}{(1-x)(1-x y)}-\frac{1}{(1-x)(1-y)}=-\frac{1}{(1-y)(1-xy)}.\]
In $S_3$ the integrand is equal to 
\[=\frac{x}{(1-x)(1-x y)}-\frac{1}{(1-x)(1-y)}+\frac{1}{(1-y)(1-xy)}=0\]
so that 
\begin{multline*}
J(a)=\int_0^1\,dy\int_0^a\frac{x}{(1-x)(1-x y)}\,dx-
\int_a^1\,dx\int_0^{a/x}\frac{1}{(1-y)(1-xy)}\,dy\\
=\frac{1}{2}\log^2(1-a)-\frac{\pi^2}{6}+
\sum_{n=1}^\infty \frac{(1-a)^n}{n^2}.
\end{multline*}
This yields
\[f(t)=\int_0^t \Bigl(\frac{1}{2}\log^2(1-u/t)+
\sum_{n=1}^\infty \frac{(1-u/t)^n-1}{n^2}\Bigr)\frac{\sin u}{u}\,du.\]
A  change of variables $u=tx$ yields \eqref{E:third} for $t>0$, and it is clear
that the two sides are entire functions.
\end{proof}

\begin{prop}
We have
\[\lim_{t\to\infty} f(t)=\lim_{t\to+\infty}\int_0^1\Bigl(\frac12\log^2(1-x)+\sum_{n=1}^\infty\frac{(1-x)^n-1}{n^2}\Bigr)
\frac{\sin(tx)}{x}\,dx=0.\]
\end{prop}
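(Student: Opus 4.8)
The plan is to recognize the integral as a Fourier sine transform and invoke the Riemann--Lebesgue lemma. Write
\[
g(x):=\frac12\log^2(1-x)+\sum_{n=1}^\infty\frac{(1-x)^n-1}{n^2},
\]
so that $f(t)=\int_0^1 \frac{g(x)}{x}\sin(tx)\,dx$ by Proposition~\ref{P:Fourier}. Since $(0,1)$ is a finite interval and $\sin(tx)$ is bounded, it suffices to prove that $h(x):=g(x)/x$ belongs to $L^1(0,1)$; the Riemann--Lebesgue lemma then gives $\int_0^1 h(x)\sin(tx)\,dx\to0$ as $t\to+\infty$. So everything reduces to controlling $h$ at the two endpoints.

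First I would put the series in closed form. Recognizing $\sum_{n\ge1}(1-x)^n/n^2=\Li_2(1-x)$ and $\sum_{n\ge1}1/n^2=\Li_2(1)=\pi^2/6$, the series equals $S(x):=\Li_2(1-x)-\pi^2/6$. Near $x=0$ the term $\frac12\log^2(1-x)=\frac12 x^2+\Orden(x^3)$ contributes only $\Orden(x)$ to $h(x)$, so the delicate piece is $S(x)/x$. Here I would differentiate: $S'(x)=\dfrac{\log x}{1-x}$, and since $S(0)=0$ this gives $S(x)=\int_0^x\frac{\log s}{1-s}\,ds$. For $0<x\le\tfrac12$ the bound $\frac{1}{1-s}\le2$ yields $|S(x)|\le 2\int_0^x|\log s|\,ds=2x\bigl(1+|\log x|\bigr)$, hence $|h(x)|\le C\bigl(1+|\log x|\bigr)$ near $0$, which is integrable.

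Near $x=1$ the roles reverse. The series $S(x)=\Li_2(1-x)-\pi^2/6$ tends to the finite value $-\pi^2/6$ as $x\to1$, and in fact $-\pi^2/6\le S(x)\le0$ throughout $(0,1)$ because $0\le 1-(1-x)^n\le1$, so $S(x)/x$ stays bounded there. The only singular contribution is $\frac{1}{2x}\log^2(1-x)$, which behaves like $\frac12\log^2(1-x)$; since $\int_{1/2}^1\log^2(1-x)\,dx=\int_0^{1/2}\log^2 u\,du<\infty$, this is an integrable (logarithmic-squared) singularity. Combining the two endpoint estimates shows $h\in L^1(0,1)$, and the Riemann--Lebesgue lemma concludes the proof.

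I expect the main obstacle to be the behavior at $x=0$: a crude bound on $S(x)/x$ only gives $\Orden(1/x)$, which is not integrable, so one must extract the genuine rate $S(x)=\Orden(x\log x)$ by identifying the series with the dilogarithm and using $S'(x)=\log x/(1-x)$. By contrast the $\log^2$ singularity at $x=1$ is harmless, since squared logarithms are integrable. Note that no interchange of the limit in $t$ with the integral of the kind flagged after \eqref{E:second} is required, because the Riemann--Lebesgue lemma handles the oscillatory integral directly.
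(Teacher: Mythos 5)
Your proof takes exactly the same route as the paper: recognize the expression as a Fourier integral of $g(x)/x$ over $(0,1)$ and apply the Riemann--Lebesgue lemma once $g(x)/x\in L^1(0,1)$ is established. The paper dismisses that integrability check as ``a simple exercise,'' while you carry it out correctly (the bound $S(x)=\Orden(x\log x)$ near $0$ via $S'(x)=\log x/(1-x)$, and the integrable $\log^2(1-x)$ singularity near $1$), so your argument is a filled-in version of the paper's proof rather than a different one.
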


\begin{proof}
This is an example of the Riemann-Lebesgue Lemma, once it has been shown 
that the function 
\[g(x):=\frac{1}{x}\Bigl(\frac12\log^2(1-x)+\sum_{n=1}^\infty\frac{(1-x)^n-1}{n^2}\Bigr)\]
is in ${\mathcal L}^1[0,1]$. 
This is a simple exercise.
\end{proof}

\section{Asymptotic Expansion.}\label{S:asymp}

The representation \eqref{P:Fourier} as a Fourier integral yields by known methods  
an asymptotic expansion for $f(t)$. First notice that the dilogarithm function
$\Li_2(z)$ can be defined in the cut plane $\C\smallsetminus[1,\infty)$ by
\begin{equation}
\Li_2(z)=-\int_0^z\log(1-u)\frac{du}{u}
\end{equation} 
where the path of integration is the segment joining $0$ and $z$.  For $z$ in the plane 
with two cuts along the real axis $(-\infty,0]$ and $[1,+\infty)$ the dilogarithm satisfies
the Euler functional equation (cf.~Lewin \cite[(1.12), p.~5]{Le})
\begin{equation}
\Li_2(z)+\Li_2(1-z)=\frac{\pi^2}{6}-\log z\log(1-z).
\end{equation}
Here and in the sequel we will denote by  $\log w$  the principal branch of the logarithm. 

\begin{defn}
Let $\Omega$ be the complex plane with the two cuts $(-\infty,0]$ and $[1,+\infty)$. 
We define the function $g(z)$ holomorphic in $\Omega$ by 
\begin{multline}\label{E:gg}
g(z):=\Bigl(\frac12\log^2(1-z)-\frac{\pi^2}{6}+\Li_2(1-z)\Bigr)
\frac{1}{z}\\=\Bigl(\frac12\log^2(1-z)-\Li_2(z)-\log z\log(1-z)\Bigr)
\frac{1}{z}.
\end{multline}
\end{defn}

It is easier to give explicitly the asymptotic expansion of the Fourier 
transform
\begin{equation}
J(t):=\int_0^1 g(x)e^{ixt}\,dx.
\end{equation}
Since $f(t)=\Im J(t)$ its asymptotic expansion can be obtained 
easily from the one of $J(t)$.

\begin{prop}\label{P:Laplace}
For any $t>0$ we have $f(t)=\Im J(t)$ where
\begin{equation}\label{E:J}
J(t)=i\int_0^\infty g(iy)e^{-ty}\,dy-i e^{it}\int_0^\infty g(1+iy)e^{-ty}\,dy.
\end{equation}
\end{prop}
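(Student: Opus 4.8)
The goal is to show that the Fourier integral $J(t)=\int_0^1 g(x)e^{ixt}\,dx$ can be rewritten as in \eqref{E:J}, a sum of two Laplace transforms along the rays emanating upward from the two endpoints $0$ and $1$ of the interval. This is the standard contour-rotation device for obtaining asymptotic expansions of Fourier integrals whose integrand is holomorphic in a neighbourhood of the path. The plan is to apply Cauchy's theorem to $g(z)e^{izt}$ on a suitable rectangular (really quarter-infinite) contour living inside the domain $\Omega$ where $g$ is holomorphic.

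First I would fix $t>0$ and consider the holomorphic function $h(z):=g(z)e^{izt}$ on $\Omega$. I would integrate $h$ around the boundary of the region $R_Y:=\{z=x+iy : 0\le x\le 1,\ 0\le y\le Y\}$ for large $Y>0$; since the two cuts of $\Omega$ are $(-\infty,0]$ and $[1,+\infty)$, the closed rectangle $\overline{R_Y}$ meets these cuts only at the corner points $0$ and $1$, so $h$ is holomorphic on the interior and continuous up to the boundary away from those two corners. By Cauchy's theorem the contour integral over $\partial R_Y$ vanishes. The four sides contribute: the bottom side gives $\int_0^1 g(x)e^{ixt}\,dx=J(t)$; the left side (from $0$ upward) gives, after parametrising $z=iy$, a term $-i\int_0^Y g(iy)e^{-ty}\,dy$; the right side (from $1+iY$ down to $1$) gives $+i\,e^{it}\int_0^Y g(1+iy)e^{-ty}\,dy$; and the top side is the remainder to be controlled.

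The main work is therefore twofold. I must first check integrability near the two corners so that the vertical integrals converge as $Y\to\infty$: near $z=0$ one has $g(z)=\frac{1}{z}\big(\tfrac12\log^2(1-z)-\Li_2(z)-\log z\log(1-z)\big)$, and the only dangerous factor is $\log z\log(1-z)/z$, which along $z=iy$ behaves like $(\log y)/y$ and is integrable against $e^{-ty}$ near $0$; near $z=1$ the factor $\tfrac12\log^2(1-z)$ produces a $\log^2$ singularity that is likewise integrable, the exponential $e^{-ty}$ controlling the behaviour for large $y$. Second, I must show the top side vanishes: on the segment $z=x+iY$, $0\le x\le1$, one has $|e^{izt}|=e^{-Yt}$, while $g(x+iY)$ grows at most polynomially (indeed only logarithmically) in $Y$, so the contribution is $\Orden(Y\,e^{-Yt}\log Y)\to0$ as $Y\to\infty$ because $t>0$ is fixed and strictly positive.

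The hard part will be the bookkeeping of branches at the corners: one must verify that the values of $\log$, $\log(1-z)$, and $\Li_2$ used in the two vertical parametrisations are exactly the principal-branch values that make $g(iy)$ and $g(1+iy)$ well defined, and that no branch of $g$ is crossed as the contour is deformed—this is guaranteed precisely because $\overline{R_Y}$ avoids both cuts except at the endpoints. Once the corner integrability and the decay of the top side are in hand, letting $Y\to\infty$ in the Cauchy identity and solving for $J(t)$ yields
\[
J(t)=i\int_0^\infty g(iy)e^{-ty}\,dy-i\,e^{it}\int_0^\infty g(1+iy)e^{-ty}\,dy,
\]
which is \eqref{E:J}; the identity $f(t)=\Im J(t)$ for $t>0$ is immediate from $f(t)=\int_0^1 g(x)\sin(tx)\,dx=\Im\int_0^1 g(x)e^{ixt}\,dx$ using \eqref{E:third} together with the definition of $g$, noting that the real constant $-\pi^2/6+\Li_2(1-x)$ added inside $g$ reproduces exactly the series $\sum_{n\ge1}\big((1-x)^n-1\big)/n^2$ appearing in \eqref{E:third}.
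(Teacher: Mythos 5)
Your proof is correct and follows exactly the paper's route: apply Cauchy's theorem to $g(z)e^{izt}$ on the rectangle with vertices $0$, $1$, $1+iR$, $iR$ and let $R\to\infty$, the paper dismissing the corner and top-side estimates with ``the bounds are easy'' where you supply them. (Only a cosmetic slip: your verbal traversal directions for the two vertical sides are opposite to the signs you write, but the signs themselves, and the final formula, are the correct ones for the counterclockwise orientation.)
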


\begin{proof}
By the definition of $g(z)$ and \eqref{P:Fourier} we have
\[f(t)=\Im\int_0^1 g(x)e^{ixt}\,dx.\]
We apply Cauchy's Theorem to a rectangle with vertices at $0$, $1$, 
$1+iR$, $iR$ with $R>0$
and let $R\to+\infty$. In this way the integral in $(0,1)$ can be converted  into the 
two infinite integrals in the Proposition.  The bounds are easy. 
\end{proof}

Since the two integrals in Proposition \ref{P:Laplace} are Laplace integrals we may apply 
Watson's Lemma \cite[17.03, p. 501]{J} to get their asymptotic expansions. In this case we have the additional difficulty 
of logarithmic singularities at the extremes of the integrals at $0$ and $1$. We follow the path
in Lyness \cite{L}.  To get the asymptotic expansion we need the behavior of $g(z)$ near 
$z=0$ and $z=1$.

\begin{lem}\label{defconstants}
For $|z|<1$ in $\Omega$ we have
\begin{multline}\label{E:near0}
g(z)=\sum_{n=0}^\infty \Bigl(\frac{H_{n+1}}{n+1}-\frac{2}{(n+1)^2}\Bigr)z^n + \Bigl(\sum_{n=0}^\infty \frac{z^n}{n+1}\Bigr)\log z\\
:=\sum_{n=0}^\infty A_nz^n+
\Bigl(\sum_{n=0}^\infty B_nz^n\Bigr)\log z.
\end{multline}
For $z\in\Omega$, with  $|1-z|<1$ we have
\begin{multline}\label{E:near1}
g(z)=-\sum_{n=0}^\infty \Psi'(n+1)(1-z)^n  + \frac12\Bigl(\sum_{n=0}^\infty(1-z)^n\Bigr)\log ^2(1-z)\\:=\sum_{n=0}^\infty C_n(1-z)^n+\Bigl(\sum_{n=0}^\infty D_n(1-z)^n\Bigr)
\log^2(1- z)
\end{multline}
where $\Psi(z)=\Gamma'(z)/\Gamma(z)$ is the digamma function. 
\end{lem}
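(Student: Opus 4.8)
The plan is to expand $g$ about each of its two singular points by substituting the standard Taylor series for $\log(1-z)$ and $\Li_2$, using at each endpoint the form of the definition \eqref{E:gg} that isolates the relevant branch. Near $z=0$ the multivaluedness comes from $\log z$, so I would start from the second expression $g(z)=\frac1z\bigl(\frac12\log^2(1-z)-\Li_2(z)-\log z\log(1-z)\bigr)$, in which $\log z$ occurs as a single isolated factor; near $z=1$ the multivaluedness comes from $\log(1-z)$, so I would use the first expression $g(z)=\frac1z\bigl(\frac12\log^2(1-z)-\frac{\pi^2}{6}+\Li_2(1-z)\bigr)$, where $\log^2(1-z)$ is the only non–single-valued ingredient. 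In both cases the strategy is identical: write $g$ as a single-valued holomorphic piece plus a logarithmic factor times a holomorphic piece, and expand each holomorphic piece as an ordinary power series, reading off the coefficients.

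For the expansion near $0$ I would use $-\log(1-z)=\sum_{k\ge1}z^k/k$, $\Li_2(z)=\sum_{k\ge1}z^k/k^2$, and the auxiliary identity $\frac12\log^2(1-z)=\sum_{k\ge1}\frac{H_{k-1}}{k}z^k$. This last identity I would derive by differentiating: $\frac{d}{dz}\frac12\log^2(1-z)=-\frac{\log(1-z)}{1-z}$ is the Cauchy product of $\sum_{m\ge0}z^m$ with $\sum_{j\ge1}z^j/j$, whose coefficient of $z^{k-1}$ is $H_{k-1}$, and integrating gives the claim. Dividing $-\log(1-z)$ by $z$ produces the coefficient $B_n=\frac1{n+1}$ of the $\log z$ factor directly. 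For the analytic part, the combination $\frac12\log^2(1-z)-\Li_2(z)=\sum_{k\ge1}\bigl(\frac{H_{k-1}}{k}-\frac1{k^2}\bigr)z^k$ has vanishing constant term, so division by $z$ yields $A_n=\frac{H_n}{n+1}-\frac1{(n+1)^2}$; the elementary relation $H_{n+1}=H_n+\frac1{n+1}$ then rewrites this in the asserted form $\frac{H_{n+1}}{n+1}-\frac{2}{(n+1)^2}$.

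For the expansion near $1$ I would substitute $w:=1-z$ and use $\Li_2(w)=\sum_{k\ge1}w^k/k^2$ together with the geometric series $\frac1z=\frac1{1-w}=\sum_{m\ge0}w^m$. The $\log^2$ part is immediate: $\frac12\log^2(1-z)\cdot\frac1{1-w}=\frac12\bigl(\sum_{m\ge0}(1-z)^m\bigr)\log^2(1-z)$, giving $D_n=\frac12$. For the analytic part I would form the Cauchy product of $\Li_2(w)-\frac{\pi^2}{6}$ (with $w^0$–coefficient $-\frac{\pi^2}{6}$ and $w^k$–coefficient $\frac1{k^2}$ for $k\ge1$) against $\sum_{m\ge0}w^m$; the coefficient of $w^n$ is the partial sum $\sum_{k=1}^n\frac1{k^2}-\frac{\pi^2}{6}$, which equals $-\Psi'(n+1)$ since the trigamma function satisfies $\Psi'(n+1)=\frac{\pi^2}{6}-\sum_{k=1}^n\frac1{k^2}$. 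This gives $C_n=-\Psi'(n+1)$, as claimed.

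The computations are routine; the only genuine care is the bookkeeping of branches and the legitimacy of the rearrangements. On the punctured disks $0<|z|<1$ and $0<|1-z|<1$ intersected with $\Omega$, the principal branch of $\log z$, respectively $\log(1-z)$, is single-valued, and since $g$ is holomorphic on $\Omega$ the decomposition into an analytic part plus a $\log$-factor times an analytic part is forced. All the series involved converge absolutely on these disks, so the Cauchy products and the division by $z$ (equivalently multiplication by the convergent geometric series $1/(1-w)$) are justified term by term. The main—though still modest—obstacle is simply selecting at each endpoint the form of \eqref{E:gg} that isolates the correct branch, so that the remaining cofactor is genuinely holomorphic there; once that is arranged, matching coefficients is immediate.
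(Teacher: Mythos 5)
Your proposal is correct and follows essentially the same route as the paper: at each endpoint you select the form of \eqref{E:gg} that isolates the multivalued factor ($\log z$ at $0$, $\log^2(1-z)$ at $1$), and expand the remaining holomorphic cofactors in power series, exactly as in the paper's decompositions $g=g_1+g_2\log z$ and $g=g_3+g_4\log^2(1-z)$. Your coefficient computations (including the identity $\tfrac12\log^2(1-z)=\sum_{k\ge1}\frac{H_{k-1}}{k}z^k$ and the trigamma evaluation) are accurate and in fact supply details the paper leaves implicit.
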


\begin{proof}
Near $z=0$ we have $g(z)=g_1(z)+g_2(z)\log z$ where  by \eqref{E:gg} we have
\[g_1(z)=\Bigl(\frac12\log^2(1-z)-\Li_2(z)\Bigr)
\frac{1}{z},\quad g_2(z)=-\frac{\log(1-z)}{z}.\]
$g_1(z)$ and $g_2(z)$ are holomorphic at $z=0$. 
Expanding in power series at $z=0$ we get \eqref{E:near0}.

Near $z=1$ we have $g(z)=g_3(z)+g_4(z)\log^2(1-z)$ with $g_3(z)$ and $g_4(z)$ holomorphic at $z=1$. By \eqref{E:gg}
\[g_3(z)=-\frac{\pi^2}{6z}+\frac{\Li_2(1-z)}{z},\quad g_4(z)=\frac{1}{2z}.\]
The functions $g_3(z)$ and $g_4(z)$ are holomorphic at $z=1$ and expanding them in 
power series we obtain \eqref{E:near1}.
We notice the  equalities.
\begin{equation}
\Psi(n+1)= H_n-\gamma ,\quad \Psi'(n+1)=\frac{\pi^2}{6}-\sum_{k=1}^n\frac{1}{k^2}.
\end{equation}
\end{proof}

\begin{prop}
The following asymptotic expansion is valid for $t\to+\infty$
\begin{multline}\label{asymptotic}
J(t)\sim \sum_{n=0}^\infty i^{n+1} A_n\frac{n!}{t^{n+1}}+
\sum_{n=0}^\infty i^{n+1} B_n \Bigl(\Psi(n+1)-\log t+\frac{\pi i}{2}\Bigr)\frac{n!}{t^{n+1}}\\
-i e^{it}\sum_{n=0}^\infty (-i)^n C_n \frac{n!}{t^{n+1}}
-i e^{it}\sum_{n=0}^\infty (-i)^n D_n
\Bigl\{\Psi'(n+1)+\Bigl(\gamma-H_n+\log t+\frac{\pi i}{2}\Bigr)^2 \Bigr\} \frac{n!}{t^{n+1}}
\end{multline}
where the coefficients $A_n$, $B_n$, $C_n$ and $D_n$ are defined in Lemma \ref{defconstants}.
\end{prop}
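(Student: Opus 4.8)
The plan is to feed the local expansions of $g$ recorded in Lemma \ref{defconstants} into the two Laplace integrals of Proposition \ref{P:Laplace} and to apply the refinement of Watson's Lemma that tolerates logarithmic endpoint singularities, following Lyness \cite{L}. First I would substitute $z=iy$ in the first integral of \eqref{E:J} and $z=1+iy$ in the second, so that in each case the singular endpoint is $y=0$. For $y>0$ the principal logarithm gives $\log(iy)=\log y+\frac{\pi i}{2}$, while $1-(1+iy)=-iy$ yields $\log(1-z)=\log y-\frac{\pi i}{2}$ and hence $\log^2(1-z)=(\log y-\frac{\pi i}{2})^2$; one checks that $z=iy$ and $z=1+iy$ stay inside $\Omega$, so these branch choices are the correct ones. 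Plugging \eqref{E:near0} into $g(iy)$ and \eqref{E:near1} into $g(1+iy)$ converts each integrand, near $y=0$, into a series in $y^n$, $y^n\log y$, and $y^n\log^2 y$ with the explicit coefficients $A_n,B_n,C_n,D_n$ and suitable powers of $i$.

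The only analytic input needed is the family of moments obtained by differentiating $\int_0^\infty y^{s-1}e^{-ty}\,dy=\Gamma(s)t^{-s}$ with respect to $s$ and specialising at $s=n+1$:
\begin{gather*}
\int_0^\infty y^n e^{-ty}\,dy=\frac{n!}{t^{n+1}},\qquad
\int_0^\infty y^n\log y\,e^{-ty}\,dy=\frac{n!}{t^{n+1}}\bigl(\Psi(n+1)-\log t\bigr),\\
\int_0^\infty y^n\log^2 y\,e^{-ty}\,dy=\frac{n!}{t^{n+1}}\Bigl((\Psi(n+1)-\log t)^2+\Psi'(n+1)\Bigr).
\end{gather*}
Integrating the first expanded integrand term by term and multiplying by $i$ produces exactly the first two sums in \eqref{asymptotic}, the factor $\frac{\pi i}{2}$ combining with $\Psi(n+1)-\log t$ inside the parenthesis. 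For the second integrand I would expand $(\log y-\frac{\pi i}{2})^2=\log^2 y-\pi i\log y-\frac{\pi^2}{4}$ and apply the three moments; the resulting bracket $(\Psi(n+1)-\log t)^2+\Psi'(n+1)-\pi i(\Psi(n+1)-\log t)-\frac{\pi^2}{4}$ equals, after writing $\Psi(n+1)=H_n-\gamma$, precisely $\Psi'(n+1)+(\gamma-H_n+\log t+\frac{\pi i}{2})^2$. Multiplying by $-ie^{it}$ then recovers the last two sums, so the bookkeeping matches \eqref{asymptotic} term by term.

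The step requiring care is the passage from the convergent local expansions to a genuine asymptotic series, since \eqref{E:near0} and \eqref{E:near1} hold only for $|z|<1$, i.e.\ for $y<1$, whereas the integrals in \eqref{E:J} run over $(0,\infty)$. I would handle this in the standard way: split each integral at $y=1$ and estimate the tail $\int_1^\infty$ by observing that $g(iy)$ and $g(1+iy)$ are bounded by a fixed power of $y$ times $\log^2 y$ (in fact they decay like $\log^2 y/y$), so the tail is $\Orden(e^{-t})$ and lies beyond all orders of the power expansion in $1/t$. On $(0,1)$ the termwise integration is justified by the remainder estimates of the Watson--Lyness method \cite{L}, the logarithmic factors $\log y$ and $\log^2 y$ being exactly the feature that refinement is designed to accommodate. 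This tail estimate together with the uniform control of the finite remainders is the main obstacle; once it is in place, the coefficient matching described above is purely computational.
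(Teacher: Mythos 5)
Your proposal is correct and follows essentially the same route as the paper: substitute the local expansions of Lemma \ref{defconstants} into the two Laplace integrals of \eqref{E:J} with the branch determinations $\log(iy)=\log y+\frac{\pi i}{2}$ and $\log(1-(1+iy))=\log y-\frac{\pi i}{2}$, integrate term by term via the derivatives of $\Gamma(s)t^{-s}$, and invoke Lyness for the justification. The only difference is that you spell out the tail estimate and the algebraic completion of the square $(\Psi(n+1)-\log t)^2-\pi i(\Psi(n+1)-\log t)-\frac{\pi^2}{4}=(\gamma-H_n+\log t+\frac{\pi i}{2})^2$, which the paper leaves implicit.
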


\begin{proof}
By  the results in Lyness \cite{L} the asymptotic expansion of $J(t)$ is obtained by
integrating term by term the  expression \eqref{E:J} after substituting the 
power series at the extremes at $0$ and $1$, respectively. Explicitly
\begin{multline*}
J(t)\sim \sum_{n=0}^\infty i^{n+1} A_n\int_0^\infty y^n e^{-ty}\,dy+
\sum_{n=0}^\infty i^{n+1} B_n\int_0^\infty y^n\Bigl(\log y+\frac{\pi i}{2}
\Bigr) e^{-ty}\,dy\\
-i e^{it}\sum_{n=0}^\infty (-i)^n C_n \int_0^\infty y^n e^{-ty}\,dy
-i e^{it}\sum_{n=0}^\infty (-i)^n D_n\int_0^\infty y^n
\Bigl(\log y-\frac{\pi i}{2}\Bigr)^2 e^{-ty}\,dy
\end{multline*}
Computing the integrals we obtain \eqref{asymptotic}.
\end{proof}

\begin{cor}
The first order terms of the asymptotic  expansion  of $f(t)$ are 
\begin{equation}
-\frac{\cos t}{2}\frac{\log^2t}{t}+
\Bigl(\frac{\pi\sin t}{2}-\gamma\cos t -1\Bigr)\frac{\log t}{t}+
\Bigl(\frac{5\pi^2\cos t}{24}-\frac{\gamma^2\cos t}{2}+\frac{\gamma\pi\sin t}{2}-1-\gamma\Bigr)\frac{1}{t}
\end{equation}
where $\gamma$ is Euler's constant.
\end{cor}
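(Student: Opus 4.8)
The plan is to read off the first-order terms directly from the asymptotic expansion \eqref{asymptotic} of $J(t)$ and then take imaginary parts, using $f(t)=\Im J(t)$ from Proposition \ref{P:Laplace}. The crucial observation is that among the four series in \eqref{asymptotic} only the $n=0$ summand of each contributes to the orders $\log^2 t/t$, $\log t/t$ and $1/t$: every term with $n\ge 1$ carries a factor $n!/t^{n+1}$ with $n+1\ge 2$, and the accompanying powers of $\log t$ cannot compensate, so all of them are absorbed into a remainder of size $\Orden(\log^2 t/t^2)$. Thus the entire computation reduces to the four $n=0$ coefficients.

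First I would substitute the explicit values from Lemma \ref{defconstants}, namely $A_0=H_1-2=-1$, $B_0=1$, $C_0=-\Psi'(1)=-\pi^2/6$ and $D_0=\frac12$, together with $\Psi(1)=-\gamma$, $\Psi'(1)=\pi^2/6$ and $H_0=0$. Feeding these into \eqref{asymptotic} writes the $1/t$ part of $J(t)$ as a sum of two non-oscillating contributions, coming from the $A_0$ and $B_0$ terms, and two contributions carrying the factor $e^{it}$, coming from the $C_0$ and $D_0$ terms.

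The only place demanding care is the $D_0$ term, where I must expand $\bigl(\gamma-H_0+\log t+\frac{\pi i}{2}\bigr)^2=(\gamma+\log t)^2+\pi i(\gamma+\log t)-\frac{\pi^2}{4}$ and add $\Psi'(1)=\pi^2/6$; the resulting constant $\frac{\pi^2}{6}-\frac{\pi^2}{4}=-\frac{\pi^2}{12}$ is precisely what will later combine with the $C_0$ contribution. This is the step that produces the $\log^2 t$ term, and it carries the heaviest bookkeeping, because after multiplying by $-ie^{it}$ one must keep the coefficients of $\log^2 t$, $\log t$ and $1$ separate.

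Finally I would write $e^{it}=\cos t+i\sin t$, collect imaginary parts and sort by powers of $\log t$. The coefficient of $\log^2 t/t$, namely $-\frac{\cos t}{2}$, arises from the $D_0$ term alone; the coefficient of $\log t/t$ combines the non-oscillating $-1$ coming from the $-\log t$ inside the $B_0$ term with the oscillating $\frac{\pi\sin t}{2}-\gamma\cos t$ produced by the cross terms of the $D_0$ square; and the coefficient of $1/t$ gathers the $-1$ from $A_0$, the $-\gamma$ from $\Psi(1)=-\gamma$ in the $B_0$ term, the whole $C_0$ term, and the constant part of the $D_0$ square. In particular the two oscillating pieces $\frac{\pi^2}{6}$ from $C_0$ and $\frac{\pi^2}{24}$ from the $-\frac{\pi^2}{12}$ inside the $D_0$ square add up to $\frac{5\pi^2}{24}$, yielding the stated coefficient of $\cos t/t$. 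I expect no genuine obstacle beyond this algebra; the single point to watch is that $i\cdot\frac{\pi i}{2}=-\frac{\pi}{2}$ appearing in the $B_0$ term is purely real and so drops out silently when one passes to $\Im J(t)$.
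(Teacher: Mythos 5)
Your proposal is correct and is exactly the computation the paper intends (the corollary is stated without an explicit proof, being the immediate consequence of taking the $n=0$ terms of \eqref{asymptotic} and computing $\Im J(t)$). All the coefficient values and the bookkeeping in your outline check out, including the combination $\frac{\pi^2}{6}+\frac{\pi^2}{24}=\frac{5\pi^2}{24}$ and the disappearance of the real term $i\cdot\frac{\pi i}{2}=-\frac{\pi}{2}$ under $\Im$.
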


\section{Riesz type functions}\label{S:Riesz}

We have proved that the function $f(t)$ given by the power series 
\eqref{power} tends to $0$ by writing it as a Fourier transform of an $L^1$ function. 
As an example of what happens in the case of a Riesz-type function we just consider 
Riesz's $F(x)$. Riesz proves \cite{R}
\begin{equation}
F(x)=\sum_{n=1}^\infty\frac{(-1)^{n+1}x^n}{(n-1)!\zeta(2n)}=
x\sum\frac{\mu(n)}{n^2}e^{-\frac{x}{n^2}}.
\end{equation}

The Riemann Hypothesis is equivalent to $F(x)=\Orden(x^{\frac14+\varepsilon})$
for any $\varepsilon>0$. Riesz shows that $F(x)$ is not $\Orden(x^\alpha)$ for any $\alpha<1/4$. 
Therefore, it is not true that $F(x)$ converges to $0$ when $x\to\infty$. 
Riesz proves that $F(x)x^{-1/2}$ converges to $0$.  With some work we may reprove this.
Noticing that 
\begin{equation}
\sqrt{t}\;e^{-t}=\frac{1}{\sqrt{\pi}}\int_0^{+\infty} \frac{\cos(\frac32\arctan x)}{(1+x^2)^{3/4}}
\cos(x t)\,dx,\qquad t>0
\end{equation}
we can prove
\begin{equation}
\frac{F(x)}{\sqrt{x}}=\frac{1}{\sqrt{\pi}}\int_0^{+\infty}\Bigl(
\sum_{n=1}^\infty \mu(n)\frac{n\cos(\frac32\arctan(n^2t))}{(1+n^4t^2)^{3/4}}\Bigr)\cos(xt)\,dt.
\end{equation}
We apply summation by parts and use  $M(x):=\sum_{n\le x}\mu(n)=\Orden(xe^{-c\sqrt{\log x}})$ to show 
that 
\begin{equation}
g(t):=\sum_{n=1}^\infty \mu(n)\frac{n\cos(\frac32\arctan(n^2t))}{(1+n^4t^2)^{3/4}}
\end{equation} 
is in $L^1(0,\infty)$. This implies that $F(x)x^{-1/2}=o(1)$. 

We do not give the details of the proofs, but they are clearly more complicated than
those  given by Riesz. 

We can also obtain in this way that $M(x)=\Orden(x^a)$ with $\frac12\le a<1$ would 
imply that 
$\zeta(s)$ does not vanish on $\Re s>a$. Again quite a difficult way to get this simple result.

\section{The x-ray of the function $f(t)$.}

\includegraphics[width=0.95\linewidth]{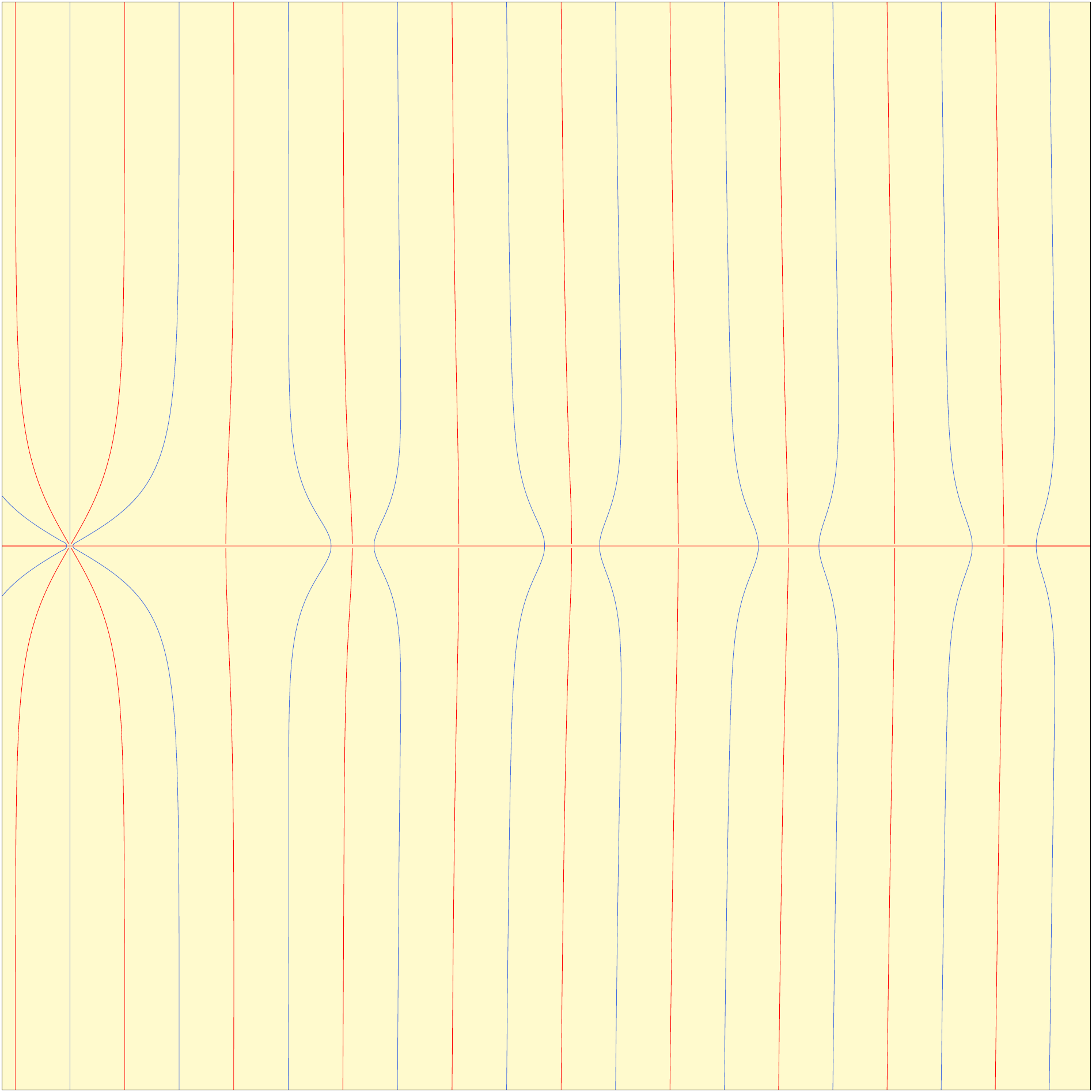}

\medskip

This represents  $f(z)$  on the square $(-2,30)\times(-16,16)$. In red lines
where $f(z)$ takes real values, 
in blue lines where $f(z)$ is purely imaginary. At $t=0$ we observe
a triple zero. Table 1 contains some more zeros of $f(t)$.

\begin{table}[htdp]
\caption{First zeros of $f(t)$.}
\begin{center}
\begin{tabular}{|c|c||c|c||c|c|}
\hline
1 & 7.67705050991057 & 16 & 53.7503680917393 & 31 & 101.945883830959\\
2 & 8.93841793140833 & 17 & 57.9542719238923 & 32 & 104.179794651601\\
3 & 13.9549229413771 & 18 & 60.0643806701981 & 33 & 108.230057788273\\
4 & 15.5679745247884 & 19 & 64.2391371546902 & 34 & 110.475488483408\\
5 & 20.2405336746206 & 20 & 66.3740436665322 & 35 & 114.514173951783\\
6 & 22.0185470304095 & 21 & 70.5238571367734 & 36 & 116.770227743121\\
7 & 26.5267465905312 & 22 & 72.6803262355070  & 37 & 120.798238189933\\
8 & 28.4064753351502 & 23 & 76.8084524955294 & 38 & 123.064133569705\\
9 & 32.8127796622118 & 24 & 78.9839168600548 & 39 & 127.082255551763\\
10 & 34.7636137360244 & 25 & 83.0929401032590 & 40 & 129.357306301363\\
11 & 39.0985258618221 & 26 & 85.2853203526836 & 41 & 133.366230408727\\
12 & 41.1028595266668 & 27 & 89.3773338407949 & 42 & 135.649829884233\\
13 & 45.3839993880412 & 28 & 91.5849167023848 & 43 & 139.650166567860\\
14 & 47.4305876227269 & 29 & 95.6616452128959 & 44 & 141.941775186108\\
15 & 51.6692361998946 & 30 & 97.8829983472283 & 45 & 145.934067362779\\
\hline
\end{tabular}
\end{center}
\label{First zeros of $f(t)$.}
\end{table}%

\section{Acknowledgement.}
I wish to express my thanks to 
Jan van de Lune  ( Hallum, The Netherlands ) for his encouragements and linguistic assistance.

This work has been  supported by  MINECO grant MTM2012-30748.

\end{document}